\documentclass[11pt]{article}

\oddsidemargin=0cm
\evensidemargin=0cm
\textwidth=16.5cm
\topmargin=-0.8cm
\textheight=22cm

\usepackage{amsmath}
\usepackage{amsthm}
\usepackage{amssymb}
\usepackage{graphicx}
\usepackage{comment}
\usepackage{color}

\usepackage{latexsym,array,amsfonts,eucal}
\usepackage{mathrsfs,dsfont,bbm}
\usepackage{titlesec,fancyhdr,titletoc}
\usepackage{verbatim}


\newtheorem{theorem}{Theorem}[section]
\newtheorem{thm}{Theorem}[section]
\newtheorem{corollary}[thm]{Corollary}
\newtheorem{lemma}[thm]{Lemma}
\newtheorem{proposition}[thm]{Proposition}
\theoremstyle{definition}

\theoremstyle{remark}

\numberwithin{equation}{section}


\begin{document}

\title{Upper tail probabilities of integrated Brownian motions}
\author{Fuchang Gao\thanks{fuchang@uidaho.edu, Research partially supported by a grant from the Simons Foundation, \#246211}\\
Department of Mathematics, University of Idaho\\
83844 Moscow, USA
    \and
Xiangfeng Yang\thanks{xiangfeng.yang@liu.se}\\
Department of Mathematics, Link\"{o}ping University\\
SE-581 83 Link\"{o}ping, Sweden
}

\date{\today}

\maketitle

\begin{abstract}
We obtain new upper tail probabilities of $m$-times integrated Brownian motions under the uniform norm and the $L^p$ norm. For the uniform norm, Talagrand's approach is used, while for the $L^p$ norm, Zolotare's approach together with suitable metric entropy and the associated small ball probabilities are used. This proposed method leads to an interesting and concrete connection between small ball probabilities and upper tail probabilities (large ball probabilities) for general Gaussian random variable in Banach spaces. As applications, explicit bounds are given for the largest eigenvalue of the covariance operator, and appropriate limiting behaviors of the Laplace transforms of $m$-times integrated Brownian motions are presented as well.
\end{abstract}

\def\keywords{\vspace{.5em}\hspace{-2em}
{\textit{Keywords and phrases}:\,\relax%
}}
\def\endkeywords{\par}

\def\MSC{\vspace{.0em}\hspace{-2em}
{\textit{AMS 2010 subject classifications}:\,\relax%
}}
\def\endMSC{\par}

\keywords{Integrated Brownian motion, upper tail probability, small ball probability, metric entropy}

\MSC{60F10, 60G15}

\section{Introduction}\label{sec:introduction} Suppose that $m\geq0$ is an integer, and $\{W(t)\}_{t\geq0}$ is the standard Brownian motion starting at zero. The $m$-times integrated Brownian motions $\{X_m(t)\}_{t\geq0}$ are defined as $X_0(t)=W(t)$ and
\begin{align}\label{def:original-ibm}
X_m(t)=\int_0^tX_{m-1}(s)ds,\quad \text{ for }t\geq0 \text{ and }m\geq1.
\end{align}
From integrations by parts, it follows that $X_m$ in (\ref{def:original-ibm}) has a representation
\begin{align}\label{def:ibm}
X_m(t)=\frac{1}{m!}\int_0^t(t-s)^mdW(s),\quad \text{ for }t\geq0 \text{ and }m\geq0.
\end{align}
We use $A_m$ to denote the covariance operator of $X_m,$ namely,
$$A_mf(t)=\int_0^1K_m(s,t)f(s)ds$$
where $K_m(s,t)=\frac{1}{(m!)^2}\int_0^{\min\{s,t\}}(s-u)^m(t-u)^mdu$ is the covariance function of $X_m.$ Among various studies on $m$-times integrated Brownian motions (cf. \cite{Shepp-1966}, \cite{Khoshnevisan-Shi-1998}, \cite{Chen-Li-2003} and \cite{Gao-Hannig-Torcaso-2003}), we specially recall the results on small ball probabilities established in \cite{Chen-Li-2003} and \cite{Gao-Hannig-Torcaso-2003}. Namely, the exact asymptotics as $\epsilon\rightarrow0^+$ of $\log \mathbb{P}\left\{\sup_{0\leq t\leq1}|X_m(t)|\leq \epsilon\right\},$ $\log \mathbb{P}\left\{\|X_m\|_{L^p[0,1]}\leq \epsilon\right\}$ (with $1\leq p<\infty$) and $\mathbb{P}\left\{\|X_m\|_{L^2[0,1]}\leq \epsilon\right\}$ are achieved. It is then natural to investigate the rare events from the opposite side, that is, upper tail probabilities as $r\rightarrow\infty,$
\begin{align}\label{question}
\mathbb{P}\left\{\sup_{0\leq t\leq1}|X_m(t)|>r\right\} \text{ and }\mathbb{P}\left\{\|X_m\|_{L^p[0,1]}>r\right\}.
\end{align}

Based on the theory of Gaussian processes, it is quite easy to deduce exact asymptotics for $\log \mathbb{P}\left\{\sup_{0\leq t\leq1}|X_m(t)|>r\right\}$ and $\log\mathbb{P}\left\{\|X_m\|_{L^p[0,1]}>r\right\};$ see Section 8.3 in \cite{Lifshits-2012} and Section 3.1 in \cite{Ledoux-Talagrand-1991}. In this paper, we will firstly derive sharp asymptotics for $\mathbb{P}\left\{\sup_{0\leq t\leq1}|X_m(t)|>r\right\}$ and $\mathbb{P}\left\{\|X_m\|_{L^2[0,1]}>r\right\},$ which are summarized in the following theorem.

\begin{theorem}\label{th:precise}
(I). For $m=0,$
\begin{align}\label{m=0-uniform-norm}
\mathbb{P}\left\{\sup_{0\leq t\leq1}|X_0(t)|>r\right\}=\mathbb{P}\left\{\sup_{0\leq t\leq1}|W(t)|>r\right\}\sim \frac{4}{\sqrt{2\pi}}\cdot r^{-1}\cdot\exp\left\{-\frac{r^2}{2}\right\};
\end{align}
For $m\geq1,$
\begin{align}\label{m>0-uniform-norm}
\mathbb{P}\left\{\sup_{0\leq t\leq1}|X_m(t)|>r\right\}\sim \frac{2}{m!\sqrt{2\pi(2m+1)}}\cdot r^{-1}\cdot\exp\left\{-\frac{(m!)^2(2m+1)r^2}{2}\right\}.
\end{align}

(II). For $0<p<\infty$ and $m=0,$
\begin{align}\label{m=0-L-norm}
\mathbb{P}\left\{\|X_0\|_{L^p[0,1]}>r\right\}=\mathbb{P}\left\{\|W\|_{L^p[0,1]}>r\right\}\sim 2\sigma \pi^{-3/4}\left(\frac{\Gamma(\frac{1}{2}+\frac{1}{p})}{\Gamma(1+\frac{1}{p})}\right)^{1/2}\cdot r^{-1}\cdot\exp\left\{-\frac{r^2}{2\sigma^2}\right\}
\end{align}
where $\sigma=\left(\frac{2}{p\pi}\right)^{1/2}\left(1+\frac{p}{2}\right)^{(p-2)/(2p)}\frac{\Gamma(\frac{1}{2}+\frac{1}{p})}{\Gamma(1+\frac{1}{p})}$;

For $p=2$ and $m\geq1,$
\begin{align}\label{m>0-L-norm}
\mathbb{P}\left\{\|X_m\|_{L^2[0,1]}>r\right\}\sim c(\overrightarrow{\lambda^m})\cdot r^{-1}\cdot\exp\left\{-\frac{r^2}{2\lambda^m_1}\right\}
\end{align}
where $\overrightarrow{\lambda^m}=(\lambda^m_n)_{n\geq1}$ is the set of eigenvalues of the covariance operator $A_m$ of $X_m,$ $c(\overrightarrow{\lambda^m})$ is a constant depending on $\overrightarrow{\lambda^m},$ and $\lambda^m_1$ is the largest eigenvalue.
\end{theorem}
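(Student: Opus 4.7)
For the four asymptotics in Theorem~\ref{th:precise} I would use four related but distinct routes. The case $m=0$ in \eqref{m=0-uniform-norm} is classical reflection: one has the exact identity
\[
\mathbb{P}\left\{\sup_{0\leq t\leq 1}|W(t)|>r\right\}=4\sum_{k\geq 0}(-1)^k\mathbb{P}\{W(1)>(2k+1)r\},
\]
and Mill's ratio leaves only the $k=0$ term as $r\to\infty$. For \eqref{m=0-L-norm} I would invoke the general sharp-tail result for $L^p$-norms of centered Gaussian processes in a Banach space (Section~3.1 of Ledoux--Talagrand, Section~8.3 of Lifshits), which has the form $Cr^{-1}e^{-r^2/(2\sigma^2)}$ with $\sigma^2=\sup_{\|f\|_{p'}\leq 1}\mathbb{E}\big(\int_0^1 W(t)f(t)\,dt\big)^2$. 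For Brownian motion this extremal problem reduces to an explicitly solvable integral-equation eigenproblem; a short computation using Gamma-function identities then recovers the stated $\sigma$ and the leading constant.

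For \eqref{m>0-uniform-norm} I would follow the Talagrand strategy advertised in the abstract. The variance $\sigma^2(t)=t^{2m+1}/[(m!)^2(2m+1)]$ is strictly increasing on $[0,1]$ and uniquely maximized at $t=1$ with value $\sigma_m^2:=1/[(m!)^2(2m+1)]$. Since $m\geq 1$, $X_m$ has almost surely $C^1$ sample paths (with derivative $X_{m-1}$), so one does not pick up the reflection-principle doubling present in the Brownian case. The decomposition
\[
\mathbb{P}\{\sup_{[0,1]} X_m>r\}=\mathbb{P}\{X_m(1)>r\}+\mathbb{P}\{\sup_{[0,1]} X_m>r,\,X_m(1)\leq r\}
\]
reduces matters to two steps: (i) $\mathbb{P}\{X_m(1)>r\}\sim(\sigma_m/\sqrt{2\pi})r^{-1}e^{-r^2/(2\sigma_m^2)}$ via Mill's ratio, and (ii) the second term is $o(r^{-1}e^{-r^2/(2\sigma_m^2)})$. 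For (ii) I would condition on $X_m(1)=y<r$: the conditional process decomposes as $y\,a(t)+B(t)$ with a deterministic regression function $a$ ($a(1)=1$) and an independent Gaussian residual $B$ whose conditional variance is strictly below $\sigma_m^2$ on $[0,1)$; a Slepian/Borell comparison for $B$ then shows the probability of exceeding $r-y\,a(t)$ is of strictly smaller exponential order after integrating against the density of $X_m(1)$. Doubling for the absolute value (the events $\{\sup X_m>r\}$ and $\{\inf X_m<-r\}$ being asymptotically disjoint) yields \eqref{m>0-uniform-norm}.

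For \eqref{m>0-L-norm} apply the Karhunen--Lo\`eve expansion $\|X_m\|_{L^2}^2\stackrel{d}{=}\sum_{n\geq 1}\lambda_n^m\xi_n^2$ with $\xi_n$ iid $N(0,1)$. The Laplace transform $\mathbb{E}\exp(-s\|X_m\|_{L^2}^2)=\prod_{n\geq 1}(1+2s\lambda_n^m)^{-1/2}$ has rightmost singularity at $s_*=-1/(2\lambda_1^m)$; factoring out $(1+2s\lambda_1^m)^{-1/2}$ and inverting by a Tauberian/contour-integration argument (Zolotarev's trick, as referenced in the abstract) yields
\[
\mathbb{P}\{\|X_m\|_{L^2}^2>x\}\sim\sqrt{\frac{2\lambda_1^m}{\pi x}}\,e^{-x/(2\lambda_1^m)}\prod_{n\geq 2}\left(1-\frac{\lambda_n^m}{\lambda_1^m}\right)^{-1/2}.
\]
Substituting $x=r^2$ reproduces \eqref{m>0-L-norm} with $c(\overrightarrow{\lambda^m})=\sqrt{2\lambda_1^m/\pi}\prod_{n\geq 2}(1-\lambda_n^m/\lambda_1^m)^{-1/2}$; the infinite product converges because $A_m$ is trace class, so the ratios $\lambda_n^m/\lambda_1^m$ tend to $0$ fast enough.

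The main obstacle I anticipate is step (ii) above: upgrading the negligibility of $\mathbb{P}\{\sup X_m>r,\,X_m(1)\leq r\}$ from the correct exponential rate (which Borell--TIS supplies directly) to $o(r^{-1}e^{-r^2/(2\sigma_m^2)})$ with the correct polynomial prefactor. This requires exploiting the strict inequality $\sigma^2(t)<\sigma_m^2$ on $[0,1)$ together with the $C^1$ regularity of $X_m$ at $t=1$ via a quantitative local expansion of the covariance near the boundary; the remaining ingredients (reflection, Karhunen--Lo\`eve, Tauberian inversion of the Laplace transform, and the $L^p$-eigenproblem for Brownian motion) are by comparison routine.
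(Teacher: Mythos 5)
Your treatment of \eqref{m>0-L-norm} is essentially the paper's own argument (Karhunen--Lo\`eve plus the Zolotarev-type tail for $\sum_n\lambda^m_n\xi_n^2$, with the same constant $\sqrt{2\lambda^m_1/\pi}\prod_{n\ge2}(1-\lambda^m_n/\lambda^m_1)^{-1/2}$); the only point you gloss over is that convergence of the product needs the largest eigenvalue to be \emph{simple} (if $\lambda^m_2=\lambda^m_1$ the factor blows up and the prefactor changes form) --- trace-class summability alone is not enough; the paper cites \cite{Gao-Hannig-Torcaso-2003} for multiplicity one. The $m=0$ statements are not proved in the paper at all (they are quoted from Piterbarg--Fatalov and Fatalov), so your reflection argument for \eqref{m=0-uniform-norm} is fine, but your claim that the constant in \eqref{m=0-L-norm} follows from Ledoux--Talagrand-type bounds by ``a short computation'' is an overclaim: those references give only logarithmic asymptotics, and the exact prefactor for $L^p$-norms of the Wiener process is the content of a separate Laplace-method paper.

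The genuine gap is in your step (ii) for \eqref{m>0-uniform-norm}, and you flag it yourself: showing $\mathbb{P}\{\sup_{[0,1]}X_m>r,\,X_m(1)\le r\}=o\bigl(r^{-1}e^{-r^2/(2\sigma_m^2)}\bigr)$ is precisely the whole difficulty of the sharp asymptotics, and the sketch you give does not close it. Conditioning on $X_m(1)=y$, both the barrier $r-y\,a(t)$ and the conditional standard deviation of the residual $B(t)$ vanish linearly in $1-t$ as $t\to1$ (for $y$ near $r$), so a Slepian/Borell comparison that uses only ``conditional variance strictly below $\sigma_m^2$ on $[0,1)$'' gives no gain in the exponential rate; the gain must come from the ratio barrier-to-standard-deviation being of order $r$ \emph{uniformly} near $t=1$, which requires a quantitative expansion of $K_m(t,1)/K_m(t,t)$ and of the conditional variance near the endpoint, together with a chaining or discretization step to control the supremum over $t$ against a $t$-dependent barrier. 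The paper avoids all of this by invoking Talagrand's 1988 theorem: since the variance is uniquely maximized at $t=1$, it suffices to verify $\lim_{h\to0}h^{-1}\mathbb{E}\sup_{t\in T_h}\bigl(X_m(t)-X_m(1)\bigr)=0$ with $T_h=\{t:\mathbb{E}(X_m(t)X_m(1))\ge \sigma_m^2-h^2\}$, and this is a two-line computation: the covariance estimate \eqref{temp-sup} gives $1-t\le h^2/c(m)$ on $T_h$, whence $\mathbb{E}\sup_{t\in T_h}(X_m(t)-X_m(1))\le\mathbb{E}\int_{1-h^2/c(m)}^1|X_{m-1}(s)|\,ds=O(h^2)$. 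If you want to keep your decomposition-plus-conditioning route, you must either carry out the uniform local analysis just described or, more economically, replace step (ii) by this verification of Talagrand's hypothesis, which is what the ``Talagrand strategy'' actually consists of; the final two-sided sandwich $2\mathbb{P}\{X_m(1)>r\}\le\mathbb{P}\{\sup|X_m|>r\}\le2\mathbb{P}\{\sup X_m>r\}\sim2\mathbb{P}\{X_m(1)>r\}$ then finishes as you indicate.
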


The cases $m=0$ in both (I) and (II) of Theorem \ref{th:precise} have been known for a while; see for instance Theorem 7.6 in \cite{Piterbarg-Fatalov-1995} and Theorem 1 in \cite{Fatalov-2003}. We thus will prove Theorem \ref{th:precise} only for $m\geq1.$ It is worthy to note that under the uniform norm the case $m=0$ and the case $m\geq1$ show different features: $\mathbb{P}\left\{\sup_{0\leq t\leq1}W(t)>r\right\}\sim2\mathbb{P}\left\{W(1)>r\right\}$ and $\mathbb{P}\left\{\sup_{0\leq t\leq1}X_m(t)>r\right\}\sim\mathbb{P}\left\{X_m(1)>r\right\}.$

As a simple application of Theorem \ref{th:precise}, we are able to give explicit bounds for the largest eigenvalue $\lambda^m_1$ of the covariance operator $A_m.$
\begin{corollary}\label{corollary}
For every $m\geq1,$ the largest eigenvalue $\lambda^m_1$ satisfies
\begin{align}\label{eigenvalue}
\frac{1}{(m+1)^2(2m+3)}\leq\lambda^m_1\cdot(m!)^2\leq\frac{1}{2m+1}.
\end{align}
\end{corollary}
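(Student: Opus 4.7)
My plan is to derive the upper bound from the probabilistic asymptotics already established in Theorem~\ref{th:precise}, and the lower bound from the variational characterization of $\lambda^m_1$ with a judiciously chosen test function.

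For the upper bound, I would use the obvious pointwise inequality $\|X_m\|_{L^2[0,1]} \le \sup_{0\le t\le 1}|X_m(t)|$, which yields
\[
\mathbb{P}\{\|X_m\|_{L^2[0,1]} > r\} \le \mathbb{P}\left\{\sup_{0\le t\le 1}|X_m(t)| > r\right\}
\]
for every $r > 0$. Taking logarithms, dividing by $r^2$, and letting $r\to\infty$, the prefactors and the $r^{-1}$ factors disappear, and the two asymptotics in (\ref{m>0-uniform-norm}) and (\ref{m>0-L-norm}) force
\[
-\frac{1}{2\lambda^m_1} \le -\frac{(m!)^2(2m+1)}{2},
\]
which is exactly $\lambda^m_1 (m!)^2 \le 1/(2m+1)$. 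Note that this step uses only the exponential rate in (\ref{m>0-L-norm}), not the precise constant $c(\overrightarrow{\lambda^m})$.

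For the lower bound, I would invoke the Rayleigh--Ritz principle: since $A_m$ is a positive self-adjoint compact operator on $L^2[0,1]$,
\[
\lambda^m_1 \;=\; \sup_{\|f\|_{L^2}=1}\,\langle A_m f, f\rangle \;=\; \sup_{\|f\|_{L^2}=1}\,\mathbb{E}\!\left[\left(\int_0^1 f(t)X_m(t)\,dt\right)^{\!2}\right].
\]
Taking the admissible test function $f\equiv 1$ (whose $L^2$-norm is $1$), the inner integral becomes $\int_0^1 X_m(t)\,dt = X_{m+1}(1)$ by definition (\ref{def:original-ibm}). Using the representation (\ref{def:ibm}) and the Itô isometry,
\[
\mathbb{E}[X_{m+1}(1)^2] \;=\; \frac{1}{((m+1)!)^2}\int_0^1 (1-s)^{2(m+1)}\,ds \;=\; \frac{1}{((m+1)!)^2(2m+3)} \;=\; \frac{1}{(m+1)^2(m!)^2(2m+3)}.
\]
Hence $\lambda^m_1 (m!)^2 \ge 1/((m+1)^2(2m+3))$, giving the lower bound.

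I do not anticipate any serious obstacle here; the argument is essentially a two-line comparison together with the textbook variational identity. The only minor point to verify is that the exponent $-1/(2\lambda^m_1)$ in (\ref{m>0-L-norm}) is indeed the sharp large-deviation rate for $\|X_m\|_{L^2[0,1]}$, which is immediate from the sharp asymptotics stated in Theorem~\ref{th:precise}(II). One could of course hope to sharpen the lower bound by using non-constant test functions (e.g.\ the first eigenfunction of a related differential operator), but $f\equiv 1$ already produces the stated inequality.
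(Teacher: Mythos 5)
Your proposal is correct, and it differs from the paper in one half of the argument. The upper bound $\lambda^m_1(m!)^2\le 1/(2m+1)$ is obtained exactly as in the paper: compare $\mathbb{P}\{\|X_m\|_{L^2[0,1]}>r\}\le \mathbb{P}\{\sup_{0\le t\le1}|X_m(t)|>r\}$ with the sharp asymptotics (\ref{m>0-uniform-norm}) and (\ref{m>0-L-norm}) at the exponential scale (the paper does not spell out the log-and-divide step, but that is what its rate comparison amounts to). For the lower bound, however, the paper stays probabilistic: it uses the chain $\sup_{0\le t\le1}|X_{m+1}(t)|\le \|X_m\|_{L^1[0,1]}\le\|X_m\|_{L^2[0,1]}$, hence $\mathbb{P}\{\sup_{0\le t\le1}|X_{m+1}(t)|>r\}\le\mathbb{P}\{\|X_m\|_{L^2[0,1]}>r\}$, and then invokes (\ref{m>0-uniform-norm}) \emph{for $X_{m+1}$} to get $1/\lambda^m_1\le((m+1)!)^2(2m+3)$. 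You instead use the Rayleigh--Ritz characterization $\lambda^m_1=\sup_{\|f\|_{L^2}=1}\mathbb{E}\bigl(\int_0^1 f(t)X_m(t)\,dt\bigr)^2$ with the test function $f\equiv1$, which gives $\lambda^m_1\ge\mathbb{E}[X_{m+1}(1)^2]=1/\bigl(((m+1)!)^2(2m+3)\bigr)$ by the It\^o isometry; this is correct and numerically yields the identical bound (not a coincidence: the variance $\mathbb{E}[X_{m+1}(1)^2]$ is precisely the quantity governing the rate in (\ref{m>0-uniform-norm}) for $X_{m+1}$). Your route for the lower bound is more elementary, since it needs no asymptotics for $X_{m+1}$ at all --- only the variational identity and a one-line variance computation --- whereas the paper's version keeps both bounds as consequences of the tail asymptotics of Theorem \ref{th:precise}, which is the point it wants to illustrate.
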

In \cite{Gao-Hannig-Torcaso-2003}, estimates on $\lambda^m_n$ were given for large $n$ with a fixed $m.$ In \cite{Lifshits-Papageorgiou-Wozniakowski-2012}, estimates on $\lambda^m_1$ (and $\lambda^m_2$) were given for large $m.$ None of them are for a fixed $m$ and a fixed eigenvalue. But at the same time, estimates in (\ref{eigenvalue}) are worse than those in \cite{Lifshits-Papageorgiou-Wozniakowski-2012} when $m$ is large.
\begin{proof}[Proof of Corollary \ref{corollary}]
It is straightforward to check that for $1\leq p<\infty,$
$$\mathbb{P}\left\{\sup_{0\leq t\leq1}|X_{m+1}(t)|>r\right\}\leq\mathbb{P}\left\{\|X_m\|_{L^p[0,1]}>r\right\}\leq \mathbb{P}\left\{\sup_{0\leq t\leq1}|X_m(t)|>r\right\}.$$
We take $p=2,$ and use (\ref{m>0-uniform-norm}) and (\ref{m>0-L-norm}) to deduce that
$$(m!)^2(2m+1)\leq\frac{1}{\lambda^m_1}\leq\left((m+1)!\right)^2(2m+3)$$
which is equivalent to (\ref{eigenvalue}).
\end{proof}
The idea of the proof of Theorem \ref{th:precise} is simple. Under the uniform norm, we employ the method developed by Talagrand in \cite{Talagrand-1988}, while under the $L^2$ norm, an asymptotic is used regarding the $l^2$ norm which was derived by Zolotarev \cite{Zolotarev-1991} (see also \cite{Linde-1991} for generalizations). Unfortunately, for general $1\leq p<\infty,$ similar arguments will not work. The covariance operator $A_m: L^q[0,1]\rightarrow L^p[0,1]$ where $\frac{1}{p}+\frac{1}{q}=1,$ has a norm $$\|A_m\|_p:=\sup_{\|g\|_q\le 1}\|A_m g\|_p=\sup_{\|g\|_q\le1}\sup_{\|f\|_q\le 1}\int_0^1\int_0^1K_m(t,s)f(t)g(s)dtds.$$ If $p=2,$ then it is straightforward to see that $\|A_m\|_2=\lambda_1^m.$ Our second result works for general $1\leq p<\infty,$ but it is only an upper bound.

\begin{theorem}\label{th:upper-bound}
For $1\leq p<\infty$ and $m\geq1,$ the following upper bound holds
\begin{align}\label{upper-bound}
\mathbb{P}\left\{\|X_m\|_{L^p[0,1]}>r\right\}\leq c_1(m,p)\cdot \exp\left\{-\frac{r^2}{2\|A_m\|_p}+c_2(m,p)\cdot r^{\frac{2}{2m+3}}\right\}
\end{align}
where $c_1(m,p)$ and $c_2(m,p)$ are two positive constants depending on $m$ and $p.$
\end{theorem}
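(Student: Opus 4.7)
The plan is to combine a Karhunen--Lo\`eve truncation with sharp Gaussian tail bounds, in the spirit of Zolotarev's approach of isolating a finite-dimensional leading component. Using the spectral expansion $X_m(t) = \sum_{k\geq 1}\sqrt{\lambda_k^m}\,\xi_k\,e_k(t)$, where $\{\xi_k\}$ are i.i.d.\ standard Gaussian and $\{e_k\}$ are the $L^2$-orthonormal eigenfunctions of $A_m$, set $Y_N := \sum_{k=1}^N\sqrt{\lambda_k^m}\,\xi_k\,e_k$ and $Z_N := X_m - Y_N$ for each $N\geq 1$. The splitting
$$\mathbb{P}(\|X_m\|_{L^p}>r) \leq \mathbb{P}(\|Y_N\|_{L^p}>r-a) + \mathbb{P}(\|Z_N\|_{L^p}>a)$$
reduces the problem to bounding each piece and optimizing $a$ and $N$ as functions of $r$.

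For the main piece, $Y_N$ is a Gaussian vector in the finite-dimensional subspace $\mathrm{span}(e_1,\ldots,e_N)\subset L^p[0,1]$, and I would apply a Zolotarev-type sharp asymptotic in finite dimension to obtain $\mathbb{P}(\|Y_N\|_{L^p}>r-a)\leq C(N)\exp(-(r-a)^2/(2\sigma_N^2))$, where $\sigma_N^2 := \sup_{\|f\|_q\leq 1}\sum_{k=1}^N \lambda_k^m\,(\int f\,e_k)^2$ is the $L^p$-weak variance of $Y_N$ and $C(N)$ is polynomial in $N$. The essential point, which distinguishes this from a naive Borell--TIS bound (that would introduce a linear-in-$r$ correction $r\,\mathbb{E}\|Y_N\|_{L^p}$), is that finite-dimensionality permits a Laplace-method analysis of the Gaussian density on the level set $\{\|y\|_{L^p} = r\}\subset\mathbb{R}^N$ and yields only a polynomial prefactor. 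For the residual, Borell--TIS applied to $Z_N$ gives $\mathbb{P}(\|Z_N\|_{L^p}>a)\leq\exp(-(a-\mathbb{E}\|Z_N\|_{L^p})^2/(2\tilde\sigma_N^2))$ with $\tilde\sigma_N^2$ the $L^p$-weak variance of $Z_N$; using the eigenvalue decay $\lambda_k^m\asymp k^{-(2m+2)}$ from Gao--Hannig--Torcaso, one gets $\|A_m\|_p-\sigma_N^2 = O(N^{-(2m+1)})$ and $\mathbb{E}\|Z_N\|_{L^p},\tilde\sigma_N = O(N^{-(2m+1)/2})$.

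Balancing yields the stated exponent. Expanding the main bound produces an exponent $-r^2/(2\|A_m\|_p) + r\,a/\|A_m\|_p + O(r^2/N^{2m+1}) + O(\log N)$, while the residual demands $a\gtrsim r\,\tilde\sigma_N/\sqrt{\|A_m\|_p}$ in order to be dominated by $\exp(-r^2/(2\|A_m\|_p))$. Choosing $a\asymp r^{-(2m+1)/(2m+3)}$ and $N\asymp r^{(8m+8)/((2m+1)(2m+3))}$ makes the shift correction $r\,a$ of order $r^{2/(2m+3)}$ and leaves the variance-gap contribution and the polynomial prefactor strictly smaller, reproducing the theorem. The main obstacle is establishing the sharp finite-dimensional asymptotic for $\|Y_N\|_{L^p}$: for $p=2$ this reduces to the weighted $\chi^2$-tail handled by Zolotarev's $\ell^2$-asymptotic already used in Theorem \ref{th:precise}(II), but for general $L^p$ one must either run a Laplace-method computation on the level sets of $\|\cdot\|_{L^p}$ in $\mathbb{R}^N$, or, equivalently, a chaining argument over an $\varepsilon$-net of $B_{L^q}$ in the covariance pseudo-metric whose entropy is controlled through the small-ball rate $\phi(\varepsilon)\asymp\varepsilon^{-2/(2m+1)}$ of Chen--Li.
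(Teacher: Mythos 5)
Your reduction leaves the real work undone, and the step you defer is precisely the crux. The bound you assert for the truncated process, $\mathbb{P}\{\|Y_N\|_{L^p}>r-a\}\le C(N)\exp\{-(r-a)^2/(2\sigma_N^2)\}$ with $C(N)$ polynomial in $N$, is not a known fact and is false for general finite-dimensional Gaussian vectors: for a standard Gaussian in $\mathbb{R}^N$ with the Euclidean norm the tail carries a factor $r^{N-2}$, so no $r$-free constant exists once $N\ge 3$. What saves the case $p=2$ is the spectral gap together with Zolotarev's product formula, whose prefactor $\prod_{k=2}^N(1-\lambda_k^m/\lambda_1^m)^{-1/2}$ stays bounded in $N$; for $p\ne 2$ the size of the prefactor is governed by the contact geometry of the $L^q$ dual ball with the variance functional, and nothing you write controls it, let alone uniformly as $N$ grows with $r$. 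Uniformity is not a technicality here: with your choice $N\asymp r^{(8m+8)/((2m+1)(2m+3))}$, any prefactor that is exponential in $N$ (which is what generic net/union-bound or crude Laplace arguments in $\mathbb{R}^N$ produce) contributes $\exp\{cN\}=\exp\{c\,r^{(8m+8)/((2m+1)(2m+3))}\}$, and since $(8m+8)/((2m+1)(2m+3))>2/(2m+3)$ for every $m\ge1$, this swamps the allowed correction $\exp\{c_2 r^{2/(2m+3)}\}$. So the Karhunen--Lo\`eve truncation does not reduce the difficulty; it relocates it into an unproved (and, as stated, untrue) finite-dimensional estimate. Your closing sentence names the fix --- a chaining/net argument over $B_{L^q}$ in the canonical metric with entropy coming from the Chen--Li small-ball rate --- but that is exactly the proof, not an optional alternative, and you do not carry it out.

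For comparison, the paper works directly in infinite dimensions and never truncates: it converts the small-ball asymptotics of Chen--Li into the metric entropy bound $\log N(\epsilon,T,d)\le c\,\epsilon^{-1/(m+1)}$ for the dual ball $T=\{g:\|g\|_{L^q}\le1\}$ under the canonical metric (Lemma \ref{lemma-metric-entropy}, via the Kuelbs--Li/Gao--Li--Wellner duality between small balls and entropy), and then invokes Theorem 5.4 of \cite{Adler-1990}, which is a single-scale net plus Borell's inequality on each small-diameter piece with the net scale optimized in $r$; with $\alpha=1/(m+1)$ this yields the correction exponent $2\alpha/(2+\alpha)=2/(2m+3)$ directly. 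If you want to salvage your outline, replace the finite-dimensional ``sharp Laplace asymptotic'' by this entropy-based argument applied to the full process --- at which point the truncation and the balancing of $a$ and $N$ become unnecessary.
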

Note that the upper bound (\ref{upper-bound}) is not trivial. To see this, let us recall the Borell's inequality (cf. Section 2.1 in \cite{Adler-1990}). Suppose $\{Y(t)\}_{t\in T}$ is a centered Gaussian process with sample paths bounded a.s., where $T$ is some parametric set. Let $\|Y\|=\sup_{t\in T}Y(t)$ and $\sigma^2_T=\sup_{t\in T}\mathbb{E}(Y^2(t)).$ Then for $r>\mathbb{E}\|Y\|,$
\begin{align}\label{Borell}
\mathbb{P}\left\{\|Y\|>r\right\}\leq2\exp\left\{-\frac{(r-\mathbb{E}\|Y\|)^2}{2\sigma^2_T}\right\}.
\end{align}
Now we rewrite the $L^p$ norm as a uniform norm
\begin{align}\label{integral-representation}
\|X_m\|_{L^p[0,1]}=\sup_{g\in T}\int_0^1X_m(t)g(t)dt:=\sup_{g\in T}X_m(g)
\end{align}
with $T=\left\{g\in L^q[0,1]: \frac{1}{p}+\frac{1}{q}=1 \text{ and } \|g\|_{L^q[0,1]}\leq1\right\}.$ Then it follows from (\ref{Borell}) that
\begin{align}\label{temp}
\mathbb{P}\left\{\|X_m\|_{L^p[0,1]}>r\right\}\leq 2\exp\left\{-\frac{(r-\mathbb{E}\|X_m\|_{L^p[0,1]})^2}{2\|A_m\|_p}\right\}.
\end{align}
The leading term $\frac{r^2}{2\|A_m\|_p}$ coincides in (\ref{upper-bound}) and (\ref{temp}), but the next term $r^{\frac{2}{2m+3}}$ in (\ref{upper-bound}) is better than $r$ in (\ref{temp}). As an application of Theorem \ref{th:upper-bound}, we have the following estimates for the Laplace transforms of $m$-times integrated Brownian motions.
\begin{corollary}\label{corr-laplace}
For $m\geq1$ and $1\leq\theta<2,$ the following statements hold as $r\rightarrow\infty:$
\begin{align*}
\mathbb{E}\exp\left\{r\cdot\left(\sup_{0\leq t\leq 1}X_m(t)\right)^\theta\right\}&\sim \frac{1}{\sqrt{2-\theta}}\exp\left\{\frac{2-\theta}{2\theta}\left((m!)^2(2m+1)\right)^{\frac{\theta}{\theta-2}}(r\theta)^{\frac{2}{2-\theta}}\right\};\\
\mathbb{E}\exp\left\{r\cdot\left(\|X_m\|_{L^2[0,1]}\right)^\theta\right\}&\sim c(\overrightarrow{\lambda^m})\sqrt{\frac{2\pi}{(2-\theta)\lambda_1^m}}\exp\left\{\frac{2-\theta}{2\theta}\left(\lambda_1^m\right)^{\frac{\theta}{2-\theta}}(r\theta)^{\frac{2}{2-\theta}}\right\};\\
\mathbb{E}\exp\left\{r\cdot\left(\|X_m\|_{L^p[0,1]}\right)^\theta\right\}&\leq c_1(m,p,\theta)\exp\left\{c_2(m,p,\theta)r^{\frac{2}{(2-\theta)(2m+3)}}+c_3(m,p,\theta)r^{\frac{2}{2-\theta}}\right\},
\end{align*}
where $c_i, i=1,2,3,$ are three positive constants depending on $m, p$ and $\theta.$ In particular, the constant $c_3(m,p,\theta)=\frac{2-\theta}{2\theta}\theta^{2/(2-\theta)}(\|A_m\|_p)^{\theta/(2-\theta)}.$
\end{corollary}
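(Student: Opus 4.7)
The plan is to express each Laplace transform as an integral against the tail via integration by parts,
\[
\mathbb{E}\exp\{rY^\theta\}=1+r\theta\int_0^\infty y^{\theta-1}e^{ry^\theta}\,\mathbb{P}(Y>y)\,dy,
\]
insert the precise tail from Theorem~\ref{th:precise} (cases~1--2) or the upper tail bound from Theorem~\ref{th:upper-bound} (case~3), and evaluate the resulting one-dimensional integral by the Laplace (saddle-point) method. For case~1, take $Y=\sup_{0\le t\le1}X_m(t)$ and set $\sigma_m^2:=1/((m!)^2(2m+1))$. By symmetry of the centered Gaussian, $\mathbb{P}(Y>y)\sim\tfrac12\mathbb{P}(\sup|X_m|>y)$, so Theorem~\ref{th:precise}(I) gives $\mathbb{P}(Y>y)\sim\frac{1}{m!\sqrt{2\pi(2m+1)}}\,y^{-1}\exp(-y^2/(2\sigma_m^2))$. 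Writing $\phi(y):=ry^\theta-y^2/(2\sigma_m^2)$, the unique positive critical point is $y^{*}=(r\theta\sigma_m^2)^{1/(2-\theta)}\to\infty$, and a direct computation gives
\[
\phi(y^{*})=\frac{2-\theta}{2\theta}(r\theta)^{2/(2-\theta)}\sigma_m^{2\theta/(2-\theta)},\qquad|\phi''(y^{*})|=(2-\theta)/\sigma_m^2.
\]
Laplace's method yields $\int_0^\infty y^{\theta-2}e^{\phi(y)}\,dy\sim(y^{*})^{\theta-2}\sqrt{2\pi\sigma_m^2/(2-\theta)}\,e^{\phi(y^{*})}$, and the saddle identity $r\theta(y^{*})^{\theta-2}=1/\sigma_m^2$ collapses the prefactor to $1/\sqrt{2-\theta}$, which matches the announced constant after using $\sigma_m^{2\theta/(2-\theta)}=((m!)^2(2m+1))^{\theta/(\theta-2)}$. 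Case~2 is entirely parallel: set $Y=\|X_m\|_{L^2[0,1]}$, replace $\sigma_m^2$ by $\lambda_1^m$ and the leading constant by $c(\overrightarrow{\lambda^m})$ from Theorem~\ref{th:precise}(II); the identical saddle-point computation produces $c(\overrightarrow{\lambda^m})\sqrt{2\pi/((2-\theta)\lambda_1^m)}\,\exp\{\phi(y^{*})\}$.

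For the third (upper-bound) statement, substitute Theorem~\ref{th:upper-bound} into the integration-by-parts formula; the exponent to analyze becomes
\[
\psi(y):=ry^\theta-\frac{y^2}{2\|A_m\|_p}+c_2(m,p)\,y^{2/(2m+3)}.
\]
Because both $\theta$ and $2/(2m+3)$ are strictly less than $2$, the quadratic term dominates at infinity and the maximum is attained near $\widetilde y:=(r\theta\|A_m\|_p)^{1/(2-\theta)}$. The earlier saddle algebra shows that the first two terms of $\psi$ evaluated at $\widetilde y$ equal $\frac{2-\theta}{2\theta}\theta^{2/(2-\theta)}\|A_m\|_p^{\theta/(2-\theta)}\,r^{2/(2-\theta)}=c_3(m,p,\theta)\,r^{2/(2-\theta)}$, while the subcritical perturbation contributes $c_2(m,p)\,\widetilde y^{2/(2m+3)}=O(r^{2/((2-\theta)(2m+3))})$. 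Bounding the integral by a constant multiple of $e^{\max\psi}$ and absorbing the Gaussian width and polynomial prefactors into a single constant $c_1(m,p,\theta)$ yields the announced upper bound.

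The principal technical point, in my view, is to make the saddle-point step rigorous given that Theorem~\ref{th:precise} supplies only an equivalence $\sim$ valid for large $y$. The standard remedy is to split the integral at a fixed threshold $y_0$: on $[0,y_0]$ the contribution is $O(e^{ry_0^\theta})$, which is dwarfed by $e^{\phi(y^{*})}$ of order $\exp\{c\,r^{2/(2-\theta)}\}$; on $[y_0,\infty)$, choosing $y_0$ large enough that the tail asymptotic holds with uniform relative error, the rescaling $y=y^{*}+z/\sqrt{|\phi''(y^{*})|}$ combined with dominated convergence completes the Laplace expansion. The same cut-off strategy handles the upper-bound case~3, where only an inequality (not a precise equivalence) is needed.
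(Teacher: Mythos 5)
Your proof is correct, but it takes a genuinely different route from the paper. The paper does not perform any saddle-point computation itself: it quotes Theorem 1 of Lifshits (1994), reproduced as (\ref{lifshits}), which is an exact asymptotic equivalence between ${\mathbb P}\left\{\sup_{t\in T}\xi_t>\left(r\theta\sigma_T^2\right)^{1/(2-\theta)}\right\}$ and the Laplace transform ${\mathbb E}\exp\left\{r\left(\sup_{t\in T}\xi_t\right)^\theta\right\}$, and then simply feeds in the tail asymptotics of Theorem \ref{th:precise} (for $T=[0,1]$ and for the unit ball of $L^2$) and the tail bound of Theorem \ref{th:upper-bound} (for the unit ball of $L^q$). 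You instead rederive this Tauberian-type relation from scratch: integration by parts to write the Laplace transform as $1+r\theta\int_0^\infty y^{\theta-1}e^{ry^\theta}{\mathbb P}(Y>y)\,dy$, followed by Laplace's method at the saddle $y^*=(r\theta\sigma^2)^{1/(2-\theta)}$; your algebra for $\phi(y^*)$, $|\phi''(y^*)|$ and the collapse of the prefactor to $1/\sqrt{2-\theta}$ (resp.\ $c(\overrightarrow{\lambda^m})\sqrt{2\pi/((2-\theta)\lambda_1^m)}$) reproduces exactly the constants in the statement, and your cut-off argument correctly handles the fact that the tail asymptotics hold only for large $y$. What your route buys is self-containedness (no appeal to \cite{Lifshits-1994}); what the paper's route buys is brevity and the fact that Lifshits' theorem already packages the uniform-in-$r$ error control you have to supply by hand. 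Two small points to tighten: (i) the one-sided tail ${\mathbb P}\{\sup_t X_m(t)>y\}\sim{\mathbb P}\{X_m(1)>y\}$ does not follow from symmetry alone, but it is exactly what Talagrand's argument in the proof of Theorem \ref{th:precise}(I) gives, so you may cite that instead; (ii) in the third case the polynomial prefactor $(\text{const}\cdot\widetilde y)^{\theta}$ grows with $r$ and so cannot be absorbed into the constant $c_1(m,p,\theta)$ — absorb it (and the small shift of the maximizer of $\psi$ away from $\widetilde y$, which changes $\max\psi$ only by $O\bigl(r^{2/((2-\theta)(2m+3))}\bigr)$ since the maximizer is $O(\widetilde y)$) into the term $c_2(m,p,\theta)r^{2/((2-\theta)(2m+3))}$, which dominates any power of $r$; this keeps $c_3(m,p,\theta)$ exactly as announced.
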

The related results of Corollary \ref{corr-laplace} have been known for $m=0;$ see for instance \cite{Fatalov-2003-2} and references therein. The proof of Theorem \ref{th:upper-bound} is based on an upper bound estimate in \cite{Adler-1990} involving the metric entropy of $T$ endowed with the canonical metric, with the help of the small ball probabilities of $X_m.$ It turns out that such proposed method works far beyond $m$-times integrated Brownian motions. Our last result is to present an interesting and concrete connection between small ball probabilities and upper tail probabilities for general Gaussian random variables in Banach spaces.
\begin{theorem}\label{th:connection}
Let $X$ be a centered Gaussian random variable in Banach space $(E,\|\cdot\|)$ with dual space $(E^*,\|\cdot\|_*)$. Suppose ${\mathbb P}\left\{\|X\|\le \varepsilon\right\}\ge e^{-c_0\varepsilon^{-\alpha}|\log \varepsilon|^\beta}$ as $\varepsilon\to 0^+$, for some $c_0>0,$ $\alpha>0$ and $\beta\in {\mathbb R}$. Then
$${\mathbb P}\left\{\|X\|>\lambda\right\}\le c_1\exp\left\{-\frac1{2\sigma^2}\lambda^2+c_2\lambda^{\frac{\alpha}{\alpha+1}} (\log\lambda)^{\frac{\beta}{\alpha+1}}\right\},$$
where $\sigma^2=\sup_{\|g\|_*\le 1}{\mathbb E}|g(X)|^2$, and $c_1$ and $c_2$ are constants depending only on $c_0$, $\alpha$ and $\sigma$.
\end{theorem}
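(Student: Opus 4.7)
The plan is to parallel the strategy that underlies Theorem~\ref{th:upper-bound} in the abstract Banach space setting: rewrite $\|X\|$ as a Gaussian supremum via duality, convert the small-ball hypothesis into a metric entropy estimate on the index set, then apply the Adler-type tail bound for Gaussian suprema and optimize.

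First, by Hahn--Banach,
\begin{equation*}
\|X\|=\sup_{g\in T}g(X),\qquad T:=\{g\in E^*:\|g\|_*\le 1\},
\end{equation*}
so that $\{g(X):g\in T\}$ is a centered Gaussian process with canonical pseudometric $d(g,h)=(\mathbb{E}|g(X)-h(X)|^2)^{1/2}$ and variance parameter $\sigma^2=\sup_{g\in T}d(g,0)^2$ matching the $\sigma^2$ in the statement. Second, I would translate the hypothesis $-\log\mathbb{P}\{\|X\|\le\varepsilon\}\le c_0\varepsilon^{-\alpha}|\log\varepsilon|^\beta$ into a matching entropy bound
\begin{equation*}
\log N(T,d,\varepsilon)\le C\,\varepsilon^{-\alpha}|\log\varepsilon|^\beta
\end{equation*}
by combining the Kuelbs--Li correspondence between the small-ball function of $X$ and the metric entropy of the RKHS unit ball $K\subset E$ with a Li--Linde type duality relating $\log N(K,E,\cdot)$ to the canonical covering numbers of $T=B_{E^*}$.

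Third, I would invoke the Adler tail bound (the same inequality used to prove Theorem~\ref{th:upper-bound}). Its two-scale version discretizes $T$ with a $\delta$-net $T_\delta$ of cardinality $N(T,d,\delta)$ and nearest-point projection $\pi_\delta$, giving
\begin{equation*}
\mathbb{P}\{\sup_T g(X)>\lambda\}\le N(T,d,\delta)\exp\Bigl(-\frac{(\lambda-u)^2}{2\sigma^2}\Bigr)+2\exp\Bigl(-\frac{(u-\mathbb{E}Z_\delta)^2}{2\delta^2}\Bigr),
\end{equation*}
where $Z_\delta=\sup_{g\in T}|g(X)-\pi_\delta(g)(X)|$ has canonical standard deviation at most $\delta$ and $\mathbb{E}Z_\delta$ is bounded via Dudley's entropy integral applied to the estimate from step two. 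Fourth, I would optimize: pick $u$ to balance the two Gaussian exponentials, and then choose $\delta$ so that the entropy cost $\log N(T,d,\delta)$ balances the correction to the leading $-\lambda^2/(2\sigma^2)$ term; with $\log N(T,d,\delta)\sim\delta^{-\alpha}|\log\delta|^\beta$, the balanced scale is $\delta\sim\lambda^{-1/(\alpha+1)}(\log\lambda)^{\beta/(\alpha+1)}$, which produces the claimed correction $c_2\lambda^{\alpha/(\alpha+1)}(\log\lambda)^{\beta/(\alpha+1)}$.

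The main obstacle lies in step two: producing the metric entropy bound on $(T,d)$ with the correct exponent, preserving the logarithmic factor $|\log\varepsilon|^\beta$, in an arbitrary Banach space. The Kuelbs--Li--Linde theory handles power-type small-ball functions well, but propagating constants and pinning down the exact form of the Adler bound---so that the final optimization yields the exponent $\alpha/(\alpha+1)$ rather than a looser one from a naive two-level chaining---both require careful bookkeeping. Once the entropy estimate and the sharp form of the chaining inequality are in hand, the optimization in steps three and four reduces to a routine calculus exercise.
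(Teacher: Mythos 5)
Your overall strategy --- dualize $\|X\|=\sup_{g\in B_{E^*}}g(X)$, convert the small-ball hypothesis into a covering-number bound for $B_{E^*}$ under the canonical metric, then combine a Borell/Adler-type bound over a net with an optimization of the net scale --- is the same as the paper's. However, two quantitative steps, as you state them, cannot produce the claimed exponent. First, the Kuelbs--Li connection together with entropy duality does \emph{not} give $\log N(T,d,\varepsilon)\lesssim\varepsilon^{-\alpha}|\log\varepsilon|^{\beta}$: a small-ball lower bound $e^{-c_0\varepsilon^{-\alpha}|\log\varepsilon|^{\beta}}$ translates into $\log N(\varepsilon,B_{E^*},\|\cdot\|_X)\lesssim \varepsilon^{-\frac{2\alpha}{2+\alpha}}|\log\varepsilon|^{\frac{2\beta}{2+\alpha}}$; this is exactly the paper's Proposition 2.2 (via Proposition 3.1 of Gao--Li--Wellner). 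Your exponent $\alpha$ is incompatible with the target: even running the paper's own argument with entropy $\varepsilon^{-\alpha}$ would yield a correction of order $\lambda^{\frac{2\alpha}{2+\alpha}}$, strictly larger than $\lambda^{\frac{\alpha}{\alpha+1}}$ (and for $\alpha\ge 2$ the Dudley integral would not even converge, contradicting $\mathbb{E}\|X\|<\infty$), so the two errors do not cancel.

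Second, the two-scale inequality you write down is genuinely lossy, not just in need of bookkeeping. Since the remainder $Z_\delta$ has sup-variance at most $\delta^2$, making the term $2\exp\{-(u-\mathbb{E}Z_\delta)^2/(2\delta^2)\}$ comparable to $e^{-\lambda^2/(2\sigma^2)}$ forces $u\gtrsim \delta\lambda/\sigma$, and then the net term carries a correction $u\lambda/\sigma^2\gtrsim \delta\lambda^2/\sigma^3$ that is quadratic in $\lambda$. Balancing $\delta\lambda^2$ against $\log N(\delta)\sim\delta^{-\frac{2\alpha}{2+\alpha}}$ gives exponent $\frac{4\alpha}{3\alpha+2}>\frac{\alpha}{\alpha+1}$, and your proposed scale $\delta\sim\lambda^{-1/(\alpha+1)}$ makes $\delta\lambda^2$ of order $\lambda^{\frac{2\alpha+1}{\alpha+1}}>\lambda$, i.e.\ worse than the plain Borell bound. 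The paper avoids this by applying Borell's inequality directly to each local supremum $\sup_{h\in D(g_i,\varepsilon)}h(X)$ over the balls of an $\varepsilon$-net: the variance there is the global $\sigma^2$, while the mean is only the local Dudley integral $\lesssim\varepsilon^{\frac{2}{2+\alpha}}|\log\varepsilon|^{\frac{\beta}{2+\alpha}}$, so each ball contributes $\exp\{-\frac{\lambda^2}{2\sigma^2}+C\varepsilon^{\frac{2}{2+\alpha}}|\log\varepsilon|^{\frac{\beta}{2+\alpha}}\frac{\lambda}{\sigma^2}\}$, the union bound adds only $\exp\{C'\varepsilon^{-\frac{2\alpha}{2+\alpha}}|\log\varepsilon|^{\frac{2\beta}{2+\alpha}}\}$, and the choice $\varepsilon\sim\lambda^{-\frac{\alpha+2}{2\alpha+2}}(\log\lambda)^{\frac{\beta}{2\alpha+2}}$ yields precisely $c_2\lambda^{\frac{\alpha}{\alpha+1}}(\log\lambda)^{\frac{\beta}{\alpha+1}}$. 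You did flag that a naive two-level scheme could be loose, but as proposed neither the entropy exponent nor the tail inequality fits together to deliver the theorem.
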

For $m$-times integrated Brownian motion, according to \cite{Chen-Li-2003}, the small ball probabilities of $X_m$ have the following form,
$$\lim_{\epsilon\rightarrow0}\epsilon^{\frac{2}{2m+1}}\log \mathbb{P}\left\{\|X_m\|_{L^p[0,1]}\leq\epsilon\right\}=-c(m,p),\quad 1\leq p<\infty,$$
for some positive constant $c(m,p)$ depending on $m$ and $p$. In this case we can take $\alpha=2/(2m+1)$ and $\beta=0$ in Theorem \ref{th:connection} which leads to
$$\mathbb{P}\left\{\|X_m\|_{L^p[0,1]}>r\right\}\leq c_1\exp\left\{c_2 r^{\frac{2}{2m+3}}\right\}\cdot \Phi(r/\sigma),$$
where
$$\sigma^2=\sup_{\|g\|_q\le 1}{\mathbb E}|g(X)|^2=\sup_{\|g\|_q\le 1}\int_0^1\int_0^1 K_m(t,s)g(t)g(s)dtdt,$$
and $q=p/(p-1)$. Note that $\sigma^2=\|A_m\|_p$. Indeed, it is trivial that $\sigma^2\le \|A_m\|_p$. To see the other direction, we notice that $K_m(t,s)$ is covariance kernel. Thus
\begin{align*}
\|A_m\|_p&=\sup_{\|g\|_q\le1}\sup_{\|f\|_q\le 1}\int_0^1\int_0^1K_m(t,s)f(t)g(s)dtds\\
&=\frac12 \sup_{\|g\|_q\le1}\sup_{\|f\|_q\le 1}\int_0^1\int_0^1K_m(t,s)[f(t)g(s)+f(s)g(t)]dtds\\
&=\sup_{\|g\|_q\le1}\sup_{\|f\|_q\le 1}\int_0^1\int_0^1K_m(t,s)\left[\frac{f(t)+g(t)}{2}\cdot \frac{f(s)+g(s)}{2}-\frac{f(t)-g(t)}{2}\cdot \frac{f(s)-g(s)}{2}\right]dtds\\
&\le \sup_{\|g\|_q\le1}\sup_{\|f\|_q\le 1}\int_0^1\int_0^1K_m(t,s)\left[\frac{f(t)+g(t)}{2}\cdot \frac{f(s)+g(s)}{2}\right]dtds\\
&\le \sigma^2,
\end{align*}
where the last inequality follows from the fact that $\|(f+g)/2\|_q\le 1$. This recovers Theorem \ref{th:upper-bound}.

\section{Proofs}\label{sec:proofs}
\subsection{Proof of Theorem \ref{th:precise}}\label{subsec:1}
As remarked before, we will only prove for $m\geq1.$ It is straightforward to deduce from (\ref{def:ibm}) that
$$\sup_{0\leq t\leq 1}\mathbb{E}(X_m(t))^2=\sup_{0\leq t\leq 1}\frac{1}{(m!)^2}\frac{t^{2m+1}}{2m+1}=\frac{1}{(m!)^2}\frac{1}{2m+1}$$
and the suprema occurs uniquely at $t=1.$ Then by the result in \cite{Talagrand-1988}, the asymptotic (\ref{m>0-uniform-norm}) is proved if the following holds
$$\lim_{h\rightarrow0}h^{-1}\mathbb{E}\sup_{t\in T_h}\left(X_m(t)-X_m(1)\right)=0$$
with $T_h=\left\{t\in [0,1]: \mathbb{E}(X_m(t)X_m(1))\geq \frac{1}{(m!)^2}\frac{1}{2m+1}-h^2\right\}.$ To see this, notice that for $t\in T_h,$
\begin{equation}\label{temp-sup}
\begin{aligned}
h^2&\geq\mathbb{E}X_m^2(1)-\mathbb{E}(X_m(t)X_m(1))\\
&=\frac{1}{(m!)^2}\left[\int_0^1(1-s)^{2m}ds-\int_0^t(t-s)^m(1-s)^mds\right]\\
&=\frac{1}{(m!)^2}\left[\int_t^1(1-s)^{2m}ds+\int_0^t(1-s)^m\left((1-s)^m-(t-s)^m\right)ds\right]\\
&\geq\frac{1}{(m!)^2}\left[\frac{(1-t)^{2m+1}}{2m+1}+\int_0^t(1-s)^m(1-t)(1-s)^{m-1}ds\right]\\
&=\frac{1}{(m!)^2}\left[\frac{(1-t)^{2m+1}}{2m+1}+(1-t)\left(\frac{1}{2m}-\frac{(1-t)^{2m}}{2m}\right)\right].
\end{aligned}
\end{equation}
For small $h,$ any $t\in T_h$ will be close to $1,$ we thus set such $t\in[1/2,1]$ in (\ref{temp-sup}). In this way,
\begin{equation}\label{temp-sup-1}
h^2\geq c(m)\cdot(1-t)
\end{equation}
for some positive constant $c(m)$ depending on $m.$ Therefore,
\begin{align*}
\lim_{h\rightarrow0}h^{-1}\mathbb{E}\sup_{t\in T_h}\left(X_m(t)-X_m(1)\right)&=\lim_{h\rightarrow0}h^{-1}\mathbb{E}\sup_{t\in T_h}-\int_t^1X_{m-1}(s)ds\\
&\leq\lim_{h\rightarrow0}h^{-1}\mathbb{E}\sup_{t\in T_h}\int_t^1|X_{m-1}(s)|ds\\
&\leq\lim_{h\rightarrow0}h^{-1}\mathbb{E}\int_{1-\frac{h^2}{c(m)}}^1|X_{m-1}(s)|ds
\end{align*}
where last inequality is from (\ref{temp-sup-1}). This limit is then obvious zero since $\sup_{0\leq s\leq 1}\mathbb{E}|X_{m-1}(s)|<\infty.$ We also notice that
\begin{align*}
2\mathbb{P}\left\{X_m(1)>r\right\}=\mathbb{P}\left\{|X_m(1)|>r\right\}&\leq\mathbb{P}\left\{\sup_{0\leq t\leq1}|X_m(t)|>r\right\}\\
&\leq 2\mathbb{P}\left\{\sup_{0\leq t\leq1}X_m(t)>r\right\}\sim2\mathbb{P}\left\{X_m(1)>r\right\}
\end{align*}
which proves (I).

For the proof of (II), we first recall the Karhunen-Lo\`{e}ve expansion for $X_m$ as follows
$$X_m(t)=\sum_{n=1}^\infty Z_n\sqrt{\lambda^m_n}f_n(t)$$
where $\{Z_n\}_{n\geq1}$ is a sequence of i.i.d. standard normal $N(0,1)$ random variables, $\{\lambda^m_n\}_{n\geq1}$ is the set of eigenvalues of the covariance operator $A_m,$ and $\{f_n(t)\}_{n\geq1}$ is the set of the associated eigenfunctions that forms an orthonormal basis of $L^2[0,1].$ Then we have the in law identity
$$\|X_m\|_{L^2[0,1]}=\left(\sum_{n=1}^\infty\lambda^m_n Z^2_n\right)^{1/2}.$$
Now the results in \cite{Zolotarev-1991} can be applied in such $l^2$ and
\begin{align*}
\mathbb{P}\left\{\|X_m\|_{L^2[0,1]}>r\right\}&=\mathbb{P}\left\{\left(\sum_{n=1}^\infty\lambda^m_n Z^2_n\right)^{1/2}>r\right\}\sim2\cdot \bar{c}(\overrightarrow{\lambda^m})\cdot\sqrt{\frac{\lambda^m_1}{2\pi}}\cdot r^{-1}\cdot\exp\left\{-\frac{r^2}{2\lambda^m_1}\right\}
\end{align*}
where $\bar{c}(\overrightarrow{\lambda^m})$ is a constant depending on the eigenvalues $\{\lambda^m_n\}_{n\geq1}$ whose exact expression is $\bar{c}(\overrightarrow{\lambda^m})=\prod_{n=2}^\infty\left(1-\lambda^m_n/\lambda^m_1\right)^{-1/2}.$ The fact that $0<\bar{c}(\overrightarrow{\lambda^m})<\infty$ can be seen as follows. Since $\lambda^m_1$ is the largest eigenvalue (with multiplicity $1;$ cf. \cite{Gao-Hannig-Torcaso-2003}), $1-\lambda^m_n/\lambda^m_1$ is always positive and less than $1.$ Therefore the convergence of the product is equivalent to the convergence of the series $\sum_{n=2}^\infty\lambda^m_n/\lambda^m_1.$ The convergence of eigenvalue series is a basic fact of a covariance operator.

\subsection{Proof of Theorem \ref{th:upper-bound}}
We recall that $T=\left\{g\in L^q[0,1]: \frac{1}{p}+\frac{1}{q}=1 \text{ and } \|g\|_{L^q[0,1]}\leq1\right\}$ and
\begin{align*}
\|X_m\|_{L^p[0,1]}=\sup_{g\in T}X_m(g)=\sup_{g\in T}\int_0^1X_m(t)g(t)dt.
\end{align*}
On the parametric set $T$ we define the canonical metric $d(f,g)=\sqrt{\mathbb{E}(X_m(f)-X_m(g))^2}.$ Let $N(\epsilon, T, d)$ be the minimum number of open balls of radius $\epsilon$ needed to cover $T,$ then $\log N(\epsilon, T, d)$ is the \textit{metric entropy} of $(T,d).$ The proof will make use of the following upper estimate of the metric entropy of $(T,d).$

\begin{lemma}\label{lemma-metric-entropy}
For some constant $c>0,$
$$\log N(\epsilon, T, d)\leq c\cdot \epsilon^{-\frac{1}{m+1}}.$$
\end{lemma}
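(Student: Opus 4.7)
The plan is to make the canonical metric explicit and then apply a classical metric entropy bound for Sobolev balls. First I would substitute the representation $X_m(t)=(m!)^{-1}\int_0^t(t-u)^m\,dW(u)$ from (\ref{def:ibm}) into $X_m(g)=\int_0^1 X_m(t)g(t)\,dt$ and use stochastic Fubini to obtain
\[
X_m(h)=\int_0^1\phi_h(u)\,dW(u),\qquad \phi_h(u):=\frac{1}{m!}\int_u^1(t-u)^m h(t)\,dt.
\]
The It\^o isometry then yields $d(f,g)=\|\phi_{f-g}\|_{L^2[0,1]}$, and since $h\mapsto \phi_h$ is linear, $N(\epsilon,T,d)$ coincides with the $L^2$-covering number of the set $\mathcal{F}:=\{\phi_h : \|h\|_{L^q[0,1]}\le 1\}$.

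Second, I would identify $\mathcal{F}$ as a bounded subset of a Sobolev space. Repeated differentiation under the integral gives
\[
\phi_h^{(k)}(u)=\frac{(-1)^k}{(m-k)!}\int_u^1 (t-u)^{m-k}h(t)\,dt,\qquad 0\le k\le m,
\]
so $\phi_h^{(k)}(1)=0$ for every such $k$, while $\phi_h^{(m+1)}=(-1)^{m+1}h$ in the distributional sense. Consequently
\[
\mathcal{F}\subseteq\mathcal{B}:=\bigl\{\phi\in W^{m+1,q}[0,1] : \|\phi^{(m+1)}\|_{L^q}\le 1,\ \phi^{(k)}(1)=0 \text{ for } 0\le k\le m\bigr\},
\]
and it suffices to estimate $\log N(\epsilon,\mathcal{B},\|\cdot\|_{L^2})$. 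Moreover, H\"older's inequality applied to the formulas above gives the uniform pointwise bound $|\phi_h^{(k)}(u)|\le C(m,p)$ on $[0,1]$ whenever $\|h\|_{L^q}\le 1$, which will be useful for controlling approximants.

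Third, the desired bound follows from a classical Kolmogorov--Tikhomirov / Birman--Solomyak entropy estimate: for $s\ge 1$ and $q\ge 1$ with $s>(1/q-1/2)_+$ (trivially true here, since $s=m+1\ge 2$), the unit ball of $W^{s,q}[0,1]$ has $L^2$-metric entropy of order $\epsilon^{-1/s}$. Applying this with $s=m+1$ yields $\log N(\epsilon,\mathcal{B},\|\cdot\|_{L^2})\le c\,\epsilon^{-1/(m+1)}$, which is the claim. If a self-contained proof is preferred, one can partition $[0,1]$ into $N\asymp\epsilon^{-1/(m+1)}$ equal subintervals, approximate $\phi_h$ on each piece by its Taylor polynomial of degree $m$ at the left endpoint, and bound the $L^2$ error via H\"older applied to the integral form of the remainder; the $O(N)$ uniformly bounded Taylor coefficients are then discretised at the appropriate resolution to form an $\epsilon$-net. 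I expect the main technical point of the direct route to be securing the sharp exponent $-(m+1)$ uniformly in $q$, particularly when $q$ is close to $1$ (equivalently, $p$ large), where naive piecewise-constant approximation of $h$ is insufficient and one must instead use the Taylor polynomials of $\phi_h$ as described.
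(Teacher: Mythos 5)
Your argument is correct, but it takes a genuinely different route from the paper. The paper never touches Sobolev balls: it expands $X_m$ in its Karhunen--Lo\`eve series, identifies $N(\epsilon,T,d)$ with the covering number of a symmetric convex set $S\subset \ell^2$, and then imports the known small ball asymptotics $\log \mathbb{P}\{\|X_m\|_{L^p}\le \epsilon\}\asymp -\epsilon^{-2/(2m+1)}$ from \cite{Chen-Li-2003}, converting them into the entropy bound via the Kuelbs--Li type duality of Proposition 3.1 in \cite{Gao-Li-Wellner-2010} (with $\alpha=\tfrac{2}{2m+1}$ this gives exponent $\tfrac{2\alpha}{2+\alpha}=\tfrac{1}{m+1}$). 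You instead compute the canonical metric directly: by stochastic Fubini and It\^o isometry, $d(f,g)=\|\phi_{f-g}\|_{L^2}$ with $\phi_h(u)=\frac{1}{m!}\int_u^1(t-u)^m h(t)\,dt$, so $N(\epsilon,T,d)$ is the $L^2$-covering number of $\{\phi_h:\|h\|_{L^q}\le 1\}$, which sits inside a bounded ball of $W^{m+1,q}[0,1]$ (the boundary conditions at $1$ plus H\"older control the lower-order derivatives), and the classical Birman--Solomyak/Kolmogorov--Tikhomirov estimate gives entropy of order $\epsilon^{-1/(m+1)}$ since $m+1>(1/q-1/2)_+$. Your route is purely analytic and needs no small ball input, which is appealing and also yields the matching lower bound; the paper's route is shorter given its references and, more importantly, is exactly the mechanism that is later abstracted into Proposition \ref{prop} and Theorem \ref{th:connection}, which your Sobolev argument does not generalize to. One caution: your optional ``self-contained'' sketch via Taylor polynomials on a uniform partition does not by itself deliver the sharp exponent when $q<2$ (i.e.\ $p>2$): H\"older on each interval of length $\delta$ gives a total $L^2$ remainder of order $\delta^{m+3/2-1/q}$, which with $\delta\asymp\epsilon^{1/(m+1)}$ is $\lesssim\epsilon$ only for $q\ge 2$; in the regime $q<2$ the sharp entropy rate genuinely requires the adaptive-partition (or interpolation) argument behind the Birman--Solomyak theorem, so you should rely on that cited result rather than the uniform-grid sketch.
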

\begin{proof}
We recall the Karhunen-Lo\`{e}ve expansion for $X_m$ (which was used in Section \ref{subsec:1}) as follows
$$X_m(t)=\sum_{n=1}^\infty Z_n\sqrt{\lambda^m_n}f_n(t).$$
There is an elegant connection between the small ball probability $\log \mathbb{P}\left\{\|X_m\|_{L^p[0,1]}\leq \epsilon\right\}$ and the metric entropy $\log N(\epsilon, S, \|\cdot\|_{l^2}),$ where
\begin{align}\label{s-l-2}
S=\left\{(c_1, c_2, \ldots)\in l^2: \quad c_n=\int_0^1g(t)\sqrt{\lambda^m_n}f_n(t)dt,\,\,\|g\|_{L^q[0,1]}\leq1\right\};
\end{align}
see \cite{Gao-Li-Wellner-2010} and \cite{Kuelbs-Li-1993}. We now show that $\log N(\epsilon, T, d)=\log N(\epsilon, S, \|\cdot\|_{l^2}).$ To this end, the covariance function $K_m(s,t)$ of $X_m$ can be written as
\begin{align*}
K_m(s,t)=\mathbb{E}\left(X_m(s)X_m(t)\right)=\mathbb{E}\left(\sum_{n=1}^\infty Z_n\sqrt{\lambda^m_n}f_n(s)\sum_{n=1}^\infty Z_n\sqrt{\lambda^m_n}f_n(t)\right)=\sum_{n=1}^\infty \lambda^m_nf_n(s)f_n(t).
\end{align*}
Therefore, the covariance operator
$$A_mg(t)=\int_0^1g(s)K_m(s,t)ds=\sum_{n=1}^\infty \lambda^m_nf_n(t)\int_0^1g(s)f_n(s)ds.$$
Thus the canonical metric
\begin{equation}\label{metric-identity}
\begin{aligned}
d^2(f,g)&=\mathbb{E}(X_m(f)-X_m(g))^2=\mathbb{E}\left(\int_0^1X_m(t)\left(f(t)-g(t)\right)dt\right)^2\\
&=\int_0^1\left(f(t)-g(t)\right)A_m\left(f(t)-g(t)\right)dt\\
&=\sum_{n=1}^\infty\lambda^m_n\left(\int_0^1\left(f(t)-g(t)\right)f_n(t)dt\right)^2\\
&=\sum_{n=1}^\infty c_n^2=\|\vec{c}\|_{l^2}^2,
\end{aligned}
\end{equation}
where $\vec{c}=(c_1, c_2, \ldots)$ with $c_n=\sqrt{\lambda^m_n}\int_0^1\left(f(t)-g(t)\right)f_n(t)dt.$ Now we can pair a point $g\in T$ and a point $\vec{c}\in S,$ then the identity (\ref{metric-identity}) implies that an $\epsilon$ ball of $g$ is also an $\epsilon$ ball of $\vec{c}.$ Thus $\log N(\epsilon, T, d)=\log N(\epsilon, S, \|\cdot\|_{l^2}).$

Now we find estimates on $\log N(\epsilon, S, \|\cdot\|_{l^2})$ with the help of small ball probabilities of $X_m.$ According to \cite{Chen-Li-2003}, the small ball probabilities of $X_m$ have the following form,
$$\lim_{\epsilon\rightarrow0}\epsilon^{\frac{2}{2m+1}}\log \mathbb{P}\left\{\|X_m\|_{L^p[0,1]}\leq\epsilon\right\}=-c(m,p),\quad 1\leq p<\infty,$$
for some positive constant $c(m,p)$ depending on $m$ and $p.$ From Proposition 3.1 in \cite{Gao-Li-Wellner-2010}, it follows
$$\log N(\epsilon, S, \|\cdot\|_{l^2})\leq c\cdot \epsilon^{-\frac{1}{m+1}}$$
for some positive constant $c.$ This completes the proof.
\end{proof}
We note that the same arguments yield $\log N(\epsilon, T, d)\geq c'\cdot \epsilon^{-\frac{1}{m+1}}$ with some constant $c'>0.$ Now we apply a result to estimate the upper tail probability by making use of metric entropy $\log N(\epsilon, T, d).$ More precisely, Theorem 5.4 in \cite{Adler-1990} says that if $\log N(\epsilon, T, d)\leq c\cdot \epsilon^{-\alpha},$ then
$$\mathbb{P}\left\{\|X_m\|_{L^p[0,1]}>r\right\}\leq c_1\exp\left\{c_2\cdot r^{\frac{2\alpha}{2+\alpha}}\right\}\cdot \Phi(r/\|A_m\|_p)$$
for two positive constants $c_1$ and $c_2,$ where $\Phi(r)=(2\pi)^{-1/2}\int_r^\infty e^{-x^2/2}dx.$ According to Lemma \ref{lemma-metric-entropy}, the parameter $\alpha=\frac{1}{m+1}.$ Then it is straightforward to derive (\ref{upper-bound}).

\subsection{Proof of Corollary \ref{corr-laplace}}
The proof is based on a result of \cite{Lifshits-1994} connecting the upper tail behavior of a supremum random variable and its Laplace transform. More precisely, let $\{\xi_t\}_{t\in T}$ be a bounded and centered Gaussian random function with an arbitrary parametric set $T,$ then Theorem 1 in \cite{Lifshits-1994} says, as $r\rightarrow\infty,$
\begin{equation}\label{lifshits}
\begin{aligned}
\mathbb{P}\left\{\sup_{t\in T}\xi_t>\left(r\theta\sigma_T^2\right)^{1/(2-\theta)}\right\}\sim&\sqrt{2-\theta}\,\,\mathbb{E}\exp\left\{r\cdot\left(\sup_{t\in T}\xi_t\right)^\theta1_{\left\{\sup_{t\in T}\xi_t>0\right\}}\right\}\\
&\cdot\exp\left\{-\left(r\theta \sigma_T^2\right)^{2/(2-\theta)}\frac{1}{\theta\sigma_T^2}\right\}\cdot\frac{\sigma_T}{\sqrt{2\pi}\left(r\theta \sigma_T^2\right)^{1/(2-\theta)}}
\end{aligned}
\end{equation}
where $\sigma_T^2=\sup_{t\in T}\mathbb{E}\xi(t)^2.$ Corollary \ref{corr-laplace} directly follows from (\ref{lifshits}) by taking $\xi=X_m$ with appropriate parametric sets $T.$ More specifically, $T=[0,1]$ yields the first asymptotics in Corollary \ref{corr-laplace}, and $T=\left\{g\in L^q[0,1]: \frac{1}{p}+\frac{1}{q}=1 \text{ and } \|g\|_{L^q[0,1]}\leq1\right\}$ yields the other asymptotics.

\subsection{Proof of Theorem \ref{th:connection}}
As used in the proof of Theorem \ref{th:upper-bound}, we need connections between small ball probabilities and metric entropy estimates which comes from the following facts.
\begin{proposition}\label{prop}
Let $X$ be a centered Gaussian random variable in Banach space $(E,\|\cdot\|)$ with dual space $(E^*,\|\cdot\|_*)$. Denote $B_{E^*}$ the closed unit ball of $E^*$, and for $g\in E^*$, define $\|g\|_X=\sqrt{{\mathbb E}|g(X)|^2}$. Then, for $\alpha>0$ and
$\beta\in {\mathbb R},$ there is a constant $c_1>0$ such that for all $0<\varepsilon<1,$
$$\log {\mathbb P}\{\|X\|<\varepsilon\}\le -c_1\varepsilon^{-\alpha}|\log\varepsilon|^\beta$$
if and only if there is a constant $c_2>0$ such that for all $0<\varepsilon<1,$
$$\log N(\varepsilon, B_{E^*}, \|\cdot\|_X)\ge
c_2\varepsilon^{-\frac{2\alpha}{2+\alpha}}|\log\varepsilon|^{\frac{2\beta}{2+\alpha}};$$
and for $\beta>0$ and $\gamma\in{\mathbb R},$ there is a constant $c_3>0$ such that for all $0<\varepsilon<1,$
$$\log {\mathbb P}\{\|X\|<\varepsilon\}\le -c_3|\log\varepsilon|^\beta (\log |\log \varepsilon|)^\gamma$$
if and only if there is a constant $c_4>0$ such that for all $0<\varepsilon<1,$
$$\log N(\varepsilon, B_{E^*}, \|\cdot\|_X)\ge c_4|\log \varepsilon|^\beta (\log |\log \varepsilon|)^\gamma .$$
Furthermore, the results also hold if the inequalities are reversed.
\end{proposition}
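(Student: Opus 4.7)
The plan is to reduce Proposition \ref{prop} to the Kuelbs--Li duality theorem (and its refinements by Li--Linde and Gao--Li--Wellner), which already establishes a sharp two-sided correspondence between small ball probabilities of a centered Gaussian measure and the metric entropy of the unit ball of its reproducing kernel Hilbert space (RKHS). Once the dual-side entropy $\log N(\varepsilon, B_{E^*},\|\cdot\|_X)$ is identified with the primal RKHS entropy, the claimed equivalences become instances of a single general statement applied to two different asymptotic regimes.

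First I would set up the standard Gaussian duality. Let $S:E^*\to E$ denote the covariance operator of $X$, i.e.\ $\langle Su,v\rangle = {\mathbb E}[u(X)v(X)]$, and let $H_\mu$ be the RKHS with unit ball $K_\mu=\overline{S(B_{E^*})}$. The key identification is that the map $u\mapsto Su$ is an isometry $(E^*/\ker S,\|\cdot\|_X)\to (H_\mu,\|\cdot\|_{H_\mu})$, and the restriction to $B_{E^*}$ gives a natural surjection onto $K_\mu$. Combining this with Gaussian Sudakov-type comparisons yields the equivalence
\[
\log N(\varepsilon, B_{E^*},\|\cdot\|_X)\asymp \log N(\varepsilon, K_\mu,\|\cdot\|_E)
\]
in both directions, in the precise scaling regimes we care about. (This is precisely the content of the duality lemma underlying Proposition 3.1 of \cite{Gao-Li-Wellner-2010}.)

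Next I would invoke the Kuelbs--Li--Linde inversion: there is a universal recipe converting the small ball function $\phi(\varepsilon):=-\log\mathbb{P}(\|X\|<\varepsilon)$ into the primal entropy function $\psi(\varepsilon):=\log N(\varepsilon, K_\mu,\|\cdot\|_E)$ and vice versa, via a Young/Legendre-type transform. In the polynomial-logarithmic regime $\phi(\varepsilon)=\varepsilon^{-\alpha}|\log\varepsilon|^\beta$, a short computation (balancing $t^2 \asymp \psi(\varepsilon)$ with $\varepsilon = t^{-2/\alpha}(\log t)^{\beta/\alpha}$-type relations) yields exactly the exponent pair $\bigl(\frac{2\alpha}{2+\alpha},\frac{2\beta}{2+\alpha}\bigr)$ stated in the proposition. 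Combining this with the duality step of the previous paragraph gives the first equivalence. The reversed inequalities follow because the Kuelbs--Li--Linde correspondence is itself a two-sided bound.

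The main obstacle is the second (slowly varying, purely logarithmic) regime $\phi(\varepsilon)=|\log\varepsilon|^\beta(\log|\log\varepsilon|)^\gamma$. Here the algebraic inversion degenerates because the function is slowly varying in the Karamata sense, so the naive Legendre transform does not improve the exponent; instead $\psi(\varepsilon)\asymp\phi(\varepsilon)$ with no polynomial gain. To obtain the stated equivalence one has to verify that both the Kuelbs--Li upper bound and the Talagrand--Ledoux lower bound survive in this borderline regime, which requires tracking slowly varying correction factors rather than just leading exponents. I would handle this by the Karamata-type regular-variation argument used in \cite{Gao-Li-Wellner-2010} (and originally in Li--Linde), which isolates the dominant scale in each direction and shows that the exponents $\beta$ and $\gamma$ are preserved under the duality transform. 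The remaining steps — verifying the constants $c_1,c_2,c_3,c_4$ depend only on the asserted parameters, and that the equivalences persist when all inequalities are reversed — are then bookkeeping.
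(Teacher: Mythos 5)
Your overall route is the same as the paper's: both arguments come down to the Kuelbs--Li type connection between small ball probabilities and metric entropy combined with metric entropy duality, and both ultimately defer the quantitative work (including the one-sided implications and the purely logarithmic regime) to Proposition 3.1 of \cite{Gao-Li-Wellner-2010}. The paper does this by a short reduction: writing $X=\sum_i f_i\xi_i$, it identifies $N(\varepsilon,B_{E^*},\|\cdot\|_X)$ with $N(\varepsilon,T,\|\cdot\|_{l^2})$ for the symmetric convex set $T=\{(g(f_1),g(f_2),\dots):g\in B_{E^*}\}\subset l^2$, and then quotes that proposition verbatim, so the duality and the small-ball/entropy inversion never have to be re-derived.

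The genuine weak point in your sketch is the duality step. The isometry $u\mapsto Su$ from $(E^*,\|\cdot\|_X)$ into $H_\mu$ does \emph{not} map $B_{E^*}$ onto (or densely into) the RKHS unit ball $K_\mu$; $S(B_{E^*})$ is a much smaller set, and what the isometry gives you is only $N(\varepsilon,B_{E^*},\|\cdot\|_X)=N(\varepsilon,S(B_{E^*}),\|\cdot\|_{H_\mu})$, i.e.\ covering numbers of the image of the dual ball in the RKHS metric --- a different object from $N(\varepsilon,K_\mu,\|\cdot\|_E)$, which is covering of the RKHS ball in the norm of $E$. The asserted equivalence $\log N(\varepsilon,B_{E^*},\|\cdot\|_X)\asymp\log N(\varepsilon,K_\mu,\|\cdot\|_E)$ is exactly the duality theorem for metric entropy of operators with Hilbertian range/domain (Tomczak-Jaegermann; see also \cite{Gao-2004}); it cannot be obtained from the isometry plus ``Sudakov-type comparisons,'' which only relate entropy to $\mathbb{E}\|X\|$ at a single scale rather than giving a two-sided comparison of the two entropy functions at all scales. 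Since you in any case point to the duality ingredient of \cite{Gao-Li-Wellner-2010}, the fix is simply to invoke entropy duality (or reduce to $l^2$ and cite Proposition 3.1 there as the paper does) instead of the surjection-onto-$K_\mu$ argument; similarly, your ``Legendre-transform recipe'' phrasing should be replaced by the one-sided implications that Proposition 3.1 of \cite{Gao-Li-Wellner-2010} (building on \cite{Kuelbs-Li-1993} and Li--Linde) actually provides, which is what makes the ``if and only if'' between one-sided bounds, the reversed inequalities, and the $|\log\varepsilon|^\beta(\log|\log\varepsilon|)^\gamma$ regime legitimate.
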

\begin{proof}
The result is a consequence of metric entropy duality and Kuelbs-Li connection between metric entropy and small ball probability. It can be
seen (in less explicit form) in \cite{Gao-2004}, and follows immediately from Proposition 3.1 in \cite{Gao-Li-Wellner-2010}. Indeed, without loss of
generality, we assume that $X=\sum_{i=1}^\infty f_i\xi_i$, where $f_i\in E$ and $\xi_i$ are i.i.d. $N(0,1)$ random variables. Then we have
$$\|X\|=\sup_{g\in B_{E^*}} \left|\sum_{i=1}^\infty g(f_i)\xi_i\right|.$$
Denote $T=\{(g(f_1),g(f_2),...): g\in B_{E^*}\}\subset l^2$. Then $T$ is symmetric and convex. It is straightforward to check that $N(\varepsilon, B_{E^*}, \|\cdot\|_X)=N(\varepsilon, T, \|\cdot\|_{2})$. Thus, the result follows immediately from Proposition 3.1 in \cite{Gao-Li-Wellner-2010}.
\end{proof}

\begin{proof}[Proof of Theorem \ref{th:connection}]
The result follows from combining Proposition~\ref{prop} above and the proof of Theorem 5.4 in \cite{Adler-1990}. Indeed,  if we denote $D(g, \varepsilon)=\{h\in B_{E^*}: \|h-g\|_X<\varepsilon\}$. Then, by Dudley's metric entropy bound, we have
\begin{align*}
{\mathbb E} \sup_{h\in D(g,\varepsilon)}h(X)&\le C\int_0^{\varepsilon} \sqrt{\log N(s, B_{E^*},\|\cdot\|_X)}ds.
\end{align*}
By the lower bound assumption on the small ball probability and using Proposition~\ref{prop}, we immediately obtain
$${\mathbb E} \sup_{h\in D(g,\varepsilon)}h(X)\lesssim \frac{C}{2}(\alpha+2) \varepsilon^{\frac{2}{2+\alpha}}|\log\varepsilon|^{\frac{\beta}{\alpha+2}}.$$
By Borell's inequality, we have
$${\mathbb P} \left\{\sup_{h\in D(g,\varepsilon) }h(X)>\lambda\right\}\le 2 \exp\left\{-\frac{\lambda^2}{2\sigma^2}+C(\alpha+2) \varepsilon^{\frac{2}{2+\alpha}}|\log\varepsilon|^{\frac{\beta}{\alpha+2}}\frac{\lambda}{2\sigma^2}\right\}.$$
Let $g_1, g_2, ..., g_m$ be an $\varepsilon$-net of $B_{E^*}$ under $\|\cdot\|_X$ distance with minimum cardinality. By Proposition~\ref{prop}, we have $m\le \exp\{C'\varepsilon^{-\frac{2\alpha}{2+\alpha}}|\log\varepsilon|^{\frac{2\beta}{2+\alpha}}\}$. Thus,
\begin{align*}
{\mathbb P}\{\|X\|>\lambda\}&={\mathbb P}\left\{\sup_{h\in B_{E^*}}h(X)>\lambda\right\}\\
&\le \sum_{i=1}^m{\mathbb P}\left\{\sup_{h\in D(g_i,\varepsilon) }h(X)>\lambda\right\}\\
&\le \exp\left\{C'\varepsilon^{-\frac{2\alpha}{2+\alpha}}|\log\varepsilon|^{\frac{2\beta}{2+\alpha}}\right\}\cdot  2 \exp\left\{-\frac{\lambda^2}{2\sigma^2}+C(\alpha+2) \varepsilon^{\frac{2}{2+\alpha}}|\log\varepsilon|^{\frac{\beta}{\alpha+2}}\frac{\lambda}{2\sigma^2}\right\}.
\end{align*}
The result follows by choosing $\varepsilon\sim c\lambda^{-\frac{\alpha+2}{2\alpha+2}}(\log \lambda)^{\frac{\beta}{2\alpha+2}}$.
\end{proof}

\textit{\textbf{Acknowledgment}}.
The second named author is grateful to M. Lifshits for stimulating discussions and useful suggestions.

\footnotesize

\end{document}